\documentclass{article}

\usepackage[T1]{fontenc}
\usepackage{lmodern}
\usepackage[fleqn,leqno]{mathtools}
\usepackage{amsthm}
\usepackage[letterpaper,centering,textheight=1.2\textheight]{geometry}
\usepackage{microtype}
\usepackage[hidelinks]{hyperref}

\makeatletter
\def\@@footer{\small\hss\thepage\hss}
\def\ps@articleheading{%
\let\@oddfoot\@@footer
\let\@evenfoot\@oddfoot
\def\@oddhead{\small\scshape\hfil\rightmark\hfil}%
\let\@evenhead\@oddhead
\let\@mkboth\@gobbletwo
\let\sectionmark\@gobble
\let\subsectionmark\@gobble
}
\def\ps@plain{%
\let\@oddfoot\@@footer
\let\@evenfoot\@oddfoot  
\def\@oddhead{\small\scshape\@@date\hss}%
\let\@evenhead\@oddhead
\let\@mkboth\@gobbletwo
}
\renewcommand{\thesection}{\S\@arabic\c@section}
\renewcommand{\section}{%
\@startsection{section}{1}{\z@}%
{-3.5ex \@plus -1ex \@minus -.2ex}%
{2.3ex \@plus.2ex}%
{\centering\normalfont\large\scshape}%
}
\renewcommand{\subsection}{%
\@startsection{subsection}{2}{\parindent}%
{\topsep}%
{-.5em}%
{\normalfont\normalsize\itshape}%
}
\renewcommand{\paragraph}{%
\@startsection{paragraph}{4}{\parindent}%
{.5\baselineskip \@plus1ex \@minus.2ex}%
{-.5em}%
{\normalfont\normalsize\itshape}%
}
\let\@afterindentfalse\@afterindenttrue
\@afterindenttrue
\def\@maketitle{%
\global\let\@@date\@date
\newpage
\null
\vskip 2em%
\begin{center}
\let\footnote\thanks
{\Large\@title\par}%
\vskip 1.5em%
{%
\large\lineskip.5em%
\begin{tabular}[t]{c}%
\normalfont\slshape\@author%
\end{tabular}
\par%
}%
\end{center}
\par%
\vskip 1.5em%
}
\renewenvironment{proof}[1][\proofname]{\par
\pushQED{\qed}%
\normalfont \topsep6\p@\@plus6\p@\relax
\trivlist
\item[\hskip\labelsep
\scshape
\hskip\parindent#1\@addpunct{.}]\ignorespaces
}{%
\popQED\endtrivlist\@endpefalse
}
\makeatother

\newtheoremstyle{proclamation}%
{\topsep}{\topsep}{\slshape}{\parindent}{\scshape}{.}{.5em}{}{}
\theoremstyle{proclamation}
\newtheorem{lemma}{Lemma}

\newtheorem*{eichler-congruence}{Eichler's congruence}
\newtheorem*{bch-formula}{Baker--Campbell--Hausdorff formula}

\newcommand{\field}{k}
\newcommand{\powerseries}{A}
\newcommand{\idealXY}{\powerseries^{X,Y}}
\newcommand{\spanXY}{\field^{X,Y}}
\newcommand{\oneover}[1]{\ensuremath{\textstyle\frac1{#1}}}
\DeclarePairedDelimiterX{\lie}[2]{[}{]}{{#1},{#2}}
\DeclarePairedDelimiter{\ilie}{[}{]}
\DeclarePairedDelimiterX{\bch}[2]{\langle}{\rangle}{{#1},{#2}}
\newcommand{\bbch}[2]{\bch[\big]{#1}{#2}}
\newcommand{\homog}[3][n]{{\bch{#2}{#3}}_{#1}}
\newcommand{\homogeven}[3][n]{{\bch{#2}{#3}}^0_{#1}}
\newcommand{\homogodd}[3][n]{{\bch{#2}{#3}}^1_{#1}}
\newcommand{\bhomog}[3][n]{{\bbch{#2}{#3}}_{#1}}
\renewcommand{\deg}[1][n]{\pi_{#1}}
\let\cong\equiv
\let\vec\boldsymbol

\begin{document}

\title{A variation of Eichler's proof of\\
the Baker--Campbell--Hausdorff formula}
\author{Eugene Ha%
\footnote{No A.I.\ whatsoever was used for any aspect of this work.}}
\date{5 August 2026}
\maketitle

Let $\bch XY$ be the formal power series $\log(e^Xe^Y)$ in two noncommutative variables, over a field of characteristic zero. Let $\homog XY$ be the homogeneous term of $\bch XY$ of degree~$n$; thus $\bch XY=\sum_{n\ge1}\homog XY$. For $n=1$, $\homog[1]XY$ is $X+Y$. For $n\ge2$, the Baker--Campbell--Hausdorff formula \cite[Satz~B]{hausdorff} asserts that $\homog XY$ is a \emph{Lie polynomial}, that is, a finite linear combination of iterated commutators of $X$ and~$Y$ (the commutator being the expression $\lie xy=xy-yx$).

Eichler \cite{eichler} established this formula as a surprisingly elementary consequence of the associativity of the product of three exponentials. His proof is short but involved.

In this note we show that the associativity of the product of \emph{four} exponentials enables one to bypass the complications of Eichler's proof to arrive at a shorter proof of the Baker--Campbell--Hausdorff formula, which is both elementary and straightforward. As a corollary of the proof, a recursive scheme to compute $\homog XY$ is derived, which involves no explicit Bernoulli numbers.

\subsection*{Acknowledgements.}

I am grateful to Benoit Jacob for his comments on an early draft, which have improved the present exposition in numerous ways.

\section{}

Let $\field$ be a field of characteristic zero, and let $\powerseries$ be the (associative) $\field$-algebra of formal power series in the noncommutative variables $X,Y$. The variables generate an ideal $\idealXY$ of~$\powerseries$, and a linear subspace $\spanXY$ of~$\idealXY$.

For all $f\in\powerseries$ and $a,b\in\idealXY$, the composite power series $f(a,b)$ is well defined. In particular, we can form $e^a\in\powerseries$ and $\bch ab\in\idealXY$. The associative property $(e^ae^b)e^c=e^a(e^be^c)$ is equivalent to the power-series identity $\bbch{\bch ab}c=\bbch a{\bch bc}$.

We say that two polynomials $p(X,Y)$ and $q(X,Y)$ are \emph{congruent}, and write $p\cong q$, when their difference is a Lie polynomial. The basis of Eichler's proof of the Baker--Campbell--Hausdorff formula is a realisation of the associative property as a congruence equation.

\begin{eichler-congruence}[{\cite[(2)]{eichler}}]
\hypertarget{eichler-congruence}{}
Let $n>2$. Suppose that $\homog[m]XY$ is a Lie polynomial for every $1<m<n$. For $a,b,c\in\spanXY$, we have
\[
\homog ab+\homog{a+b}c\cong\homog bc+\homog a{b+c}.
\]
\end{eichler-congruence}

For the four-term associativity identity $\bbch{\bch ab}{\bch cd}=\bbch a{\bbch{\bch bc}d}$ the analogue of \hyperlink{eichler-congruence}{Eichler's congruence} is as follows.

\begin{lemma}
\label{lemma:quadruple-eichler-congruence}
Assume the hypothesis of \hyperlink{eichler-congruence}{Eichler's congruence}. For $a,b,c,d\in\spanXY$, we have
\begin{equation}
\label{eq:quadruple-eichler-congruence}
\homog ab+\homog cd+\homog{a+b}{c+d}
\cong\homog bc+\homog{b+c}d+\homog a{b+c+d}.
\end{equation}
\end{lemma}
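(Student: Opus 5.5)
We have two routes. One is to derive the identity directly from the four-term associativity identity, in the way Eichler's congruence comes from the three-term one. The other is to deduce it from Eichler's congruence alone. The second is cleaner, so we take it: we apply Eichler's congruence three times and add the results.

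The choice of the three instances follows the two bracketings $\bbch{\bch ab}{\bch cd}$ and $\bbch a{\bbch{\bch bc}d}$, connected by the standard pentagon of rebracketings. A direct choice is as follows.
\begin{enumerate}
\item Apply the congruence to $(a,b,c+d)$:
\[
\homog ab+\homog{a+b}{c+d}\cong\homog b{c+d}+\homog a{b+c+d}.
\]
\item Apply it to $(b,c,d)$:
\[
\homog bc+\homog{b+c}d\cong\homog cd+\homog b{c+d}.
\]
\end{enumerate}
Adding $\homog cd$ to both sides of the first congruence gives
\[
\homog ab+\homog cd+\homog{a+b}{c+d}\cong\homog b{c+d}+\homog cd+\homog a{b+c+d}.
\]
By the second congruence, $\homog b{c+d}+\homog cd\cong\homog bc+\homog{b+c}d$. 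Substituting this yields exactly \eqref{eq:quadruple-eichler-congruence}. Only two instances are needed, since the pentagon degenerates at the linear level.

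The justification rests on two checks. First, congruence is an equivalence relation compatible with addition, because Lie polynomials form a linear subspace. Second, $c+d$ and $b+c$ still lie in $\spanXY$, so Eichler's congruence applies to the triples $(a,b,c+d)$ and $(b,c,d)$.

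The only real obstacle is choosing the right instances. If the author instead intends a derivation from the four-fold associativity, the main step would be the following. Expand both sides of $\bbch{\bch ab}{\bch cd}=\bbch a{\bbch{\bch bc}d}$ in degree $n$. Use the inductive hypothesis that each $\homog[m]XY$ with $1<m<n$ is Lie, and that substituting Lie elements into a Lie polynomial gives a Lie polynomial. Then every contribution to degree $n$ is Lie except those in which a single degree-$n$ term has linear arguments. Discarding the Lie contributions leaves precisely \eqref{eq:quadruple-eichler-congruence}. The two-instance derivation above is shorter, so I would present it.
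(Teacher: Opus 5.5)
Your derivation is correct. Your opening sentence announces three applications of Eichler's congruence, but the two you actually use, for $(a,b,c+d)$ and $(b,c,d)$, are all that is needed. This is a genuinely different route from the paper's.

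The paper never invokes Eichler's congruence, which it only cites. It proves the lemma directly, essentially as in the alternative you sketch at the end. It takes degree-$n$ components of both sides of $\bbch{\bch ab}{\bch cd}=\bbch a{\bbch{\bch bc}d}$ using~\eqref{eq:degree-relations}, which gives the exact identity~\eqref{eq:quadruple-eichler-equality}. It then observes that the right side is Lie, by the induction hypothesis, Hausdorff's Satz~A, and the fact that homogeneous components of Lie polynomials are Lie.

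Your route is shorter once Eichler's congruence is granted. It also shows that the four-term congruence is no stronger than the three-term one. In effect, the paper's proof of the formula uses only Eichler's congruence for the triples $(X,X,2Y)$, $(X,Y,Y)$, $(X,Y,X+Y)$, $(Y,X,Y)$. So its gain over Eichler's argument comes from the choice of specialisations, not from any extra strength of four-fold associativity.

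The paper's route is self-contained. It also produces the explicit remainder terms that feed the recursion~\eqref{eq:bch-formula-recursion} and the symmetry reductions of \S3. If you tracked Eichler's remainders instead, you would get the remainder for $(a,b,c+d)$ minus that for $(b,c,d)$. Both expressions equal the difference of the two sides of the lemma, so this must coincide with the right side of~\eqref{eq:quadruple-eichler-equality} as a polynomial, though written in a different form.
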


This congruence is proved in the same way as \hyperlink{eichler-congruence}{Eichler's congruence} (see~\ref{sec:bch-formula-recursion}). Its advantage is the relative ease with which it implies the \hyperlink{bch-formula}{Baker--Campbell--Hausdorff formula}.

\begin{lemma}
\label{lemma:commuting-exponentials}
If $a\in\idealXY$, then $\bch aa=2a$ and $\homog aa=0$ for all $n\ge2$.
\end{lemma}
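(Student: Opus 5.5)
Since $a$ commutes with itself, $e^a e^a = e^{2a}$, and taking logarithms should give $\bch aa=2a$. The only point needing care is that $\log$ and $\exp$ are inverse to each other as formal power series in $\idealXY$. Once that holds, comparing homogeneous components gives $\homog aa=0$ for $n\ge2$.

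\textbf{Step 1: $e^ae^a=e^{2a}$.} Write $e^a=\sum_{k\ge0}a^k/k!$, which converges in $\powerseries$ because $a\in\idealXY$. Multiplying the two series and collecting powers of $a$ gives
\[
e^ae^a=\sum_{m\ge0}\Bigl(\sum_{i+j=m}\frac1{i!\,j!}\Bigr)a^m=\sum_{m\ge0}\frac{2^m}{m!}a^m=e^{2a}.
\]
This is the binomial theorem. It only needs $a$ to commute with itself, which is automatic, and rearranging the sums is justified by convergence in the $\idealXY$-adic topology. It helps to note once that the whole computation lives in the commutative subalgebra $\field[[t]]$, embedded via $t\mapsto a$.

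\textbf{Step 2: $\log(e^{2a})=2a$.} Here $\bch aa=\log(e^ae^a)$ with $\log(1+u)=\sum_{k\ge1}(-1)^{k+1}u^k/k$. Substituting $t\mapsto a$ is a continuous algebra homomorphism $\field[[t]]\to\powerseries$ (well defined because $a\in\idealXY$), and it respects composition of power series. So the identity $\log(e^{t})=t$ in $\field[[t]]$ transports to $\log(e^{2a})=2a$. The identity $\log(e^t)=t$ in characteristic zero is standard, for instance by formal differentiation, since both sides have derivative $1$ and vanish at $0$. Hence $\bch aa=2a$.

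\textbf{Step 3: homogeneous components.} The term $\homog aa$ is $\homog XY$ evaluated at $X=Y=a$, i.e.\ the degree-$n$ part in the formal variables before substitution. To avoid confusing this with the grading of $a$ itself, substitute $X=Y=sa$ for a central parameter $s$. Then $\homog{sa}{sa}=s^n\homog aa$, while Step 2 gives $\bch{sa}{sa}=2sa$. Comparing coefficients of $s^n$ gives $\homog[1]aa=2a$ and $\homog aa=0$ for $n\ge2$.

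The main obstacle is Step 2, justifying the substitution into $\field[[t]]$. It is routine but should be stated carefully, since composition of formal power series is what makes the formal $\log$/$\exp$ inversion valid.
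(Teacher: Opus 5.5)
Your proof is correct. The first half is what the paper calls ``an easy computation''; you spell it out via $e^ae^a=e^{2a}$ and the substitution $t\mapsto a$ from $\field[[t]]$. For the vanishing of $\homog aa$ you use a different device from the paper.

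The paper notes that substituting $X=Y=a$ into the homogeneous polynomial $\homog XY$ turns every word into $a^n$. So $\homog aa=c_na^n$, where $c_n\in\field$ is the sum of the coefficients of $\homog XY$ and does not depend on $a$. It then specialises to $a=X$, where the substitution grading coincides with the grading of $\powerseries$. The single identity $\bch XX=2X$ then gives $\sum_{n\ge2}c_nX^n=0$, hence $c_n=0$.

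You instead keep $a$ arbitrary and separate the two gradings with a central parameter $s$. This works, but it quietly leaves $\powerseries$. You need an ambient algebra such as $\powerseries[[s]]$, and Step~2 has to be rerun for $sa$, which is not in $\idealXY$. It goes through verbatim, since $t\mapsto sa$ is still a continuous substitution, and comparing $s^n$-coefficients in $\powerseries[[s]]$ is then immediate.

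The paper's argument gains economy: it exploits that $X=Y$ collapses everything to a single variable, so no extension of scalars is needed. Your scaling trick gains generality: it extracts homogeneous components from any substituted power-series identity, not only ones where the two arguments coincide.
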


\begin{proof}
The first equality is an easy computation. By homogeneity, we have $\homog aa=c_na^n$ for some $c_n\in\field$. The coefficient $c_n$ is independent of~$a$. In particular $\sum_{n\ge2}c_nX^n=\bch XX-2X=0$. Hence $c_n=0$ for all $n\ge2$.
\end{proof}

\begin{bch-formula}[{\cite[Satz~B]{hausdorff}}]
\hypertarget{bch-formula}{}
For $n\ge2$, the homogeneous term $\homog XY$ is a Lie polynomial.
\end{bch-formula}

\begin{proof}
Direct computation shows that $\homog[2]XY=\oneover2\lie XY$.

Now suppose that $n>2$ and that $\homog[m]XY$ is a Lie polynomial for every $1<m<n$. As a result of Lemma~\ref{lemma:commuting-exponentials}, the congruence of Lemma~\ref{lemma:quadruple-eichler-congruence} becomes
\begin{align}
\label{eq:2X2Y-congruence}
\homog{2X}{2Y}
&\cong\homog XY+\homog{X+Y}Y+\homog X{X+2Y},
&\text{when $X,X,Y,Y$ replaces $a,b,c,d$},\\
\label{eq:2XY-congruence}
2\homog XY
&\cong\homog YX+\homog{X+Y}Y+\homog X{X+2Y},
&\text{when $X,Y,X,Y$ replaces $a,b,c,d$}.
\end{align}
Subtraction gives the congruence
\[
\homog{2X}{2Y}-2\homog XY\cong\homog XY-\homog YX.
\]
By homogeneity, we have $\homog{2X}{2Y}=2^n\homog XY$ and $-\homog YX=(-1)^n\homog XY$, since $-\bch YX=\bch{-X}{-Y}$. Therefore
\begin{equation}
\label{eq:bch-congruence}
\bigl(2^n-(-1)^n-3\bigr)\homog XY\cong0.
\end{equation}
The coefficient is nonzero because $n>2$. Hence $\homog XY\cong0$.
\end{proof}

Note that the proof would be essentially unchanged had we instead considered the alternative four-term associativity identity $\bbch{\bch ab}{\bch cd}=\bbch{\bbch a{\bch bc}}d$, and used the corresponding analogue of \hyperlink{eichler-congruence}{Eichler's congruence}, which reads
\[
\homog ab+\homog cd+\homog{a+b}{c+d}
\cong\homog bc+\homog a{b+c}+\homog{a+b+c}d.
\]

\section{}
\label{sec:bch-formula-recursion}

We shall spell out the proof of Lemma~\ref{lemma:quadruple-eichler-congruence}, according to Eichler's outline \cite[(2)]{eichler}, in order to identify the recursive scheme for $\homog XY$ which is implicit in the above proof of the Baker--Campbell--Hausdorff formula. (Identifying the recursive scheme in Eichler's proof is comparatively tricky.)

For $a,b\in\idealXY$, let
\[
\begin{aligned}
&\deg a=\text{homogeneous term of~$a$ of degree~$n$}
&&(n\ge1),\\
&\homog[<n]ab=\sum_{1<m<n}\homog[m]ab
&&(n>2).
\end{aligned}
\]
Since $\bch ab$ is the sum of its homogeneous terms, and $\deg\homog[m]ab$ is $0$ whenever $m>n$, we have
\begin{equation}
\label{eq:degree-relations}
\begin{aligned}
\deg\bch ab
&=\deg(\homog[1]ab+\homog ab+\homog[<n]ab)\\
&=(\deg a+\deg b)+\homog{\deg[1]a}{\deg[1]b}+\deg\homog[<n]ab.
\end{aligned}
\end{equation}
Note that $\deg\bch ab=\homog ab$ when $a,b$ belong to $\spanXY$, though generally $\deg\bch ab$ is distinct from $\homog ab$.

\begin{proof}[Proof of Lemma~\ref{lemma:quadruple-eichler-congruence}]
We shall express the difference of the two sides of~\eqref{eq:quadruple-eichler-congruence} as a Lie polynomial.

Since $a,b,c,d$ belong to $\spanXY$, an application of~\eqref{eq:degree-relations} gives
\begin{equation}
\label{eq:(ab)(cd)}
\deg\bbch{\bch ab}{\bch cd}
=\homog ab+\homog cd+\homog{a+b}{c+d}+\deg\bhomog[<n]{\bch ab}{\bch cd},
\end{equation}
while two applications of~\eqref{eq:degree-relations} gives
\begin{equation}
\label{eq:a((bc)d)}
\deg\bbch a{\bbch{\bch bc}d}
=\homog bc+\homog{b+c}d+\deg\bhomog[<n]{\bch bc}d
+\homog a{b+c+d}+\deg\bhomog[<n]a{\bbch{\bch bc}d}.
\end{equation}
Since $\bbch{\bch ab}{\bch cd}=\bbch a{\bbch{\bch bc}d}$, subtraction of~\eqref{eq:a((bc)d)} from~\eqref{eq:(ab)(cd)} gives
\begin{equation}
\label{eq:quadruple-eichler-equality}
\begin{aligned}
\MoveEqLeft
\homog ab+\homog cd+\homog{a+b}{c+d}-\homog bc-\homog{b+c}d-\homog a{b+c+d}\\
&=\deg\bhomog[<n]{\bch bc}d
 +\deg\bhomog[<n]a{\bbch{\bch bc}d}
 -\deg\bhomog[<n]{\bch ab}{\bch cd}.
\end{aligned}
\end{equation}
The right side is a Lie polynomial by virtue of the induction hypothesis, since 1) a Lie polynomial of Lie polynomials is itself a Lie polynomial \cite[Satz A]{hausdorff}, and 2) the homogeneous terms of a Lie polynomial are themselves Lie polynomials (evidently).
\end{proof}

\begin{bch-formula}[recursion, I]
\hypertarget{bch-formula-recursion}{}
For $n>2$, we have
\begin{equation}
\label{eq:bch-formula-recursion}
\begin{aligned}
\MoveEqLeft
\bigl(2^n-(-1)^n-3\bigr)\homog XY\\
&=\deg\bhomog[<n]{\bch XY}Y
 -\deg\bhomog[<n]{\bch YX}Y
 +\deg\bhomog[<n]X{\bbch{\bch XY}Y}
 -\deg\bhomog[<n]X{\bbch{\bch YX}Y}.
\end{aligned}
\end{equation}
\end{bch-formula}

\begin{proof}
Use equation~\eqref{eq:quadruple-eichler-equality} to express the congruences~\eqref{eq:2X2Y-congruence} and~\eqref{eq:2XY-congruence} as equalities. It is then apparent that the left side of~\eqref{eq:bch-congruence} equals the right side of~\eqref{eq:bch-formula-recursion}, apart from the terms
\[
-\deg\homog[<n]{2X}{2Y}+\deg\bhomog[<n]{\bch XY}{\bch XY}.
\]
But the first term clearly vanishes, while the second term vanishes by Lemma~\ref{lemma:commuting-exponentials}.
\end{proof}

\section{}

We can simplify the recursive \hyperlink{bch-formula-recursion}{Baker--Campbell--Hausdorff formula}~\eqref{eq:bch-formula-recursion} by observing the following two-fold symmetry.

\begin{lemma}
Let $1\le m\le n$. For $a,b,c,d\in\spanXY$, we have
\begin{align}
\label{eq:symmetry-abc}
&\deg\bhomog[m]{\bch ab}c
=(-1)^{n+m}\deg\bhomog[m]{\bch ba}c,\\
\label{eq:symmetry-abcd}
&\deg\bhomog[m]a{\bbch{\bch bc}d}=
(-1)^{n+m}\deg\bhomog[m]a{\bbch{\bch dc}b}.
\end{align}
\end{lemma}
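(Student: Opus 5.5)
The plan is to combine two involutions of $\bch{\cdot}{\cdot}$ with a parity argument on degrees. The first involution is the identity $\bch ab=-\bch{-b}{-a}$, already used in the proof of the \hyperlink{bch-formula}{Baker--Campbell--Hausdorff formula}. It holds because $(e^ae^b)^{-1}=e^{-b}e^{-a}$. The second is the sign substitution $a\mapsto -a$ on inputs drawn from $\spanXY$: if $p$ is a homogeneous power series of degree $n$ in elements of $\spanXY$, then replacing every input by its negative multiplies $p$ by $(-1)^n$.

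For \eqref{eq:symmetry-abc}, first write $\bch ba=-\bch{-a}{-b}$. Then
\[
\deg\bhomog[m]{\bch ba}c=\deg\bhomog[m]{-\bch{-a}{-b}}c .
\]
Next apply the homogeneity of $\homog[m]{\cdot}{\cdot}$ under a common sign change of both arguments, $\homog[m]{-x}{-y}=(-1)^m\homog[m]xy$, with $x=\bch{-a}{-b}$ and $y=-c$. This gives
\[
\deg\bhomog[m]{\bch ba}c=(-1)^m\deg\bhomog[m]{\bch{-a}{-b}}{-c}.
\]
The expression $\bhomog[m]{\bch{-a}{-b}}{-c}$ is obtained from $\bhomog[m]{\bch ab}c$ by negating the three inputs $a,b,c\in\spanXY$. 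Its component $\deg$ of total degree $n$ in these inputs is therefore multiplied by $(-1)^n$. Combining the two signs yields the factor $(-1)^{n+m}$, which is the claim.

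For \eqref{eq:symmetry-abcd}, I would use the same two steps applied to the inner bracket. Associativity gives $\bbch{\bch bc}d=\bbch b{\bch cd}$, and inversion gives
\[
\bbch b{\bch cd}=-\bbch{\bch{-d}{-c}}{-b}.
\]
Hence
\[
\deg\bhomog[m]a{\bbch{\bch dc}b}=\deg\bhomog[m]a{-\bbch{\bch{-b}{-c}}{-d}}.
\]
Now negate the first argument as well, writing $a=-(-a)$. Homogeneity of $\homog[m]{\cdot}{\cdot}$ then gives the factor $(-1)^m$ times
\[
\deg\bhomog[m]{-a}{\bbch{\bch{-b}{-c}}{-d}}.
\]
By the sign-substitution principle, this last expression equals $(-1)^n\deg\bhomog[m]a{\bbch{\bch bc}d}$. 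Multiplying the factors gives $(-1)^{n+m}$, as claimed.

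The step needing the most care is justifying the sign-substitution principle: negating all inputs in $\spanXY$ multiplies each degree-$n$ component by $(-1)^n$. I would argue it via the algebra automorphism $\sigma$ of $\powerseries$ determined by $X\mapsto -X$ and $Y\mapsto -Y$. This map satisfies $\sigma\circ\deg=(-1)^n\deg$, and it commutes with composition of power series, so $\sigma f(a,b)=f(\sigma a,\sigma b)$. For $a\in\spanXY$ we have $\sigma a=-a$. Everything else is bookkeeping, together with the observation that $\homog[m]{\cdot}{\cdot}$ is a homogeneous degree-$m$ polynomial, so $\homog[m]{-x}{-y}=(-1)^m\homog[m]xy$ holds for arbitrary $x,y\in\idealXY$.
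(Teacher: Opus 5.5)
Your proof is correct and is essentially the paper's argument. It alternates two sign changes with the inversion identity $\bch{a}{b}=-\bch{-b}{-a}$: negating the inputs $a,b,c,d\in\spanXY$ gives the factor $(-1)^n$ inside $\pi_n$, and negating both arguments of $\homog[m]{\cdot}{\cdot}$ gives $(-1)^m$. For the second identity you also make explicit the associativity step $\bbch{\bch{b}{c}}{d}=\bbch{b}{\bch{c}{d}}$, which the paper's ``proved in the same way'' leaves implicit, and your automorphism $X\mapsto -X$, $Y\mapsto -Y$ is a clean justification of the $(-1)^n$ sign change.
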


\begin{proof}
Equation~\eqref{eq:symmetry-abc} is proved by applying homogeneity and associativity in alternation:
\[
(-1)^n\deg\bhomog[m]{\bch ab}c
=\deg\bhomog[m]{\bch{-a}{-b}}{-c}
=\deg\bhomog[m]{-\bch ba}{-c}
=(-1)^m\deg\bhomog[m]{\bch ba}c.
\]
Equation~\eqref{eq:symmetry-abcd} is proved in the same way.
\end{proof}

Now let
\[
\begin{lgathered}
W_{n,m}(X,Y)=\deg\bhomog[m]{\bch XY}Y,\\
W_n(X,Y)=\deg\bbch{\bch XY}Y=\sum_{1\le m\le n}W_{n,m}(X,Y).
\end{lgathered}
\]
The computation of~$\homog XY$ can be reduced to a computation of these polynomials, as shown by the following formula and Lemma~\ref{lemma:bch-formula-even-simplification}.

\begin{bch-formula}[recursion, II]
\hypertarget{bch-formula-recursion-simplified}{}
For $n>2$, we have
\[
\homog XY=\homogeven XY+\homogodd XY,
\]
where
\begin{align}
\label{eq:bch-formula-even}
\bigl(2^n-(-1)^n-3\bigr)\homogeven XY
&=\sum_{\substack{1<m<n\\m\cong n\bmod2}}
\deg\bhomog[m]X{\bbch{\bch XY}Y}
-\deg\bhomog[m]X{\bbch{\bch YX}Y},\\
\label{eq:bch-formula-odd}
\bigl(2^n-(-1)^n-3\bigr)\homogodd XY
&=\sum_{\substack{1<m<n\\m\cong n+1\bmod2}}
2W_{n,m}(X,Y)+(-1)^{n+1}W_{n,m}(2Y,X).
\end{align}
\end{bch-formula}

\begin{proof}
By linearity of~$\deg$, we can write formula~\eqref{eq:bch-formula-recursion} as
\[
\begin{aligned}
\MoveEqLeft
\bigl(2^n-(-1)^n-3\bigr)\homog XY\\
&=\sum_{1<m<n}
 \deg\bhomog[m]{\bch XY}Y
 -\deg\bhomog[m]{\bch YX}Y
 +\deg\bhomog[m]X{\bbch{\bch XY}Y}
 -\deg\bhomog[m]X{\bbch{\bch YX}Y}.
\end{aligned}
\]
It therefore suffices to show that
\begin{enumerate}
\item if $n\cong m\bmod2$, then $\deg\bhomog[m]{\bch XY}Y-\deg\bhomog[m]{\bch YX}Y=0$; and
\item if $n\cong m+1\bmod2$, then $\deg\bhomog[m]X{\bbch{\bch YX}Y}=0$ and
\[
\deg\bhomog[m]{\bch XY}Y
-\deg\bhomog[m]{\bch YX}Y
+\deg\bhomog[m]X{\bbch{\bch XY}Y}
=2W_{n,m}(X,Y)+(-1)^{n+1}W_{n,m}(2Y,X).
\]
\end{enumerate}
The first equality follows from \eqref{eq:symmetry-abc}; the second from \eqref{eq:symmetry-abcd}; the third from \eqref{eq:symmetry-abc}, by virtue of the equality $\bhomog[m]X{\bbch{\bch XY}Y}=\bhomog[m]X{\bch X{2Y}}=(-1)^{m+1}\bhomog[m]{\bch X{2Y}}X$.
\end{proof}

Since $\homog[m]XY$ is a homogeneous Lie polynomial, we can express it as a sum of the form
\begin{equation}
\label{eq:homogeneous-lie-polynomial}
\homog[m]XY=\sum_{0<d<m}L_d(X,Y,\dots,Y)
\quad\text{($d$ occurrences of $Y$)},
\end{equation}
where $L_d(X,Y_1,\dots,Y_d)$ is either zero, if $\homog[m]XY$ has no term of degree~$d$ in~$Y$, or else
a homogeneous Lie polynomial of degree~$m$ in $1+d$ noncommutative variables that is of degree~$1$ in each~$Y_i$. When $L_d(X,Y,\dots,Y)$ is nonzero, its degree in~$Y$ is~$d$ and its degree in~$X$ is~$m-d$.

\begin{lemma}
\label{lemma:bch-formula-even-simplification}
Suppose that $1<m<n$ and $m\cong n\bmod2$. Write $\homog[m]XY$ as in expression~\eqref{eq:homogeneous-lie-polynomial}. We have
\begin{equation}
\label{eq:bch-formula-even-summand}
\begin{aligned}
\MoveEqLeft
\deg\bhomog[m]X{\bbch{\bch XY}Y}
-\deg\bhomog[m]X{\bbch{\bch YX}Y}\\
&=\sum_{0<d<m}
\Biggl(
\sum_{\vec n\in N^0_d}
L_d(X,W_{n_1},\dots,W_{n_d})
+\adjustlimits\sum_{\vec n\in N^1_d}\sum_{\vec m\in M_{\vec n}}
2L_d(X,W_{n_1,m_1},\dots,W_{n_d,m_d})
\Biggr),
\end{aligned}
\end{equation}
where $N^\delta_d$ and $M_{\vec n}$ are the collection of degrees
\[
\begin{lgathered}
{\textstyle
N^\delta_d=\bigl\{(n_1,\dots,n_d)\mid
n_i\ge1,\,\sum_in_i+m-d=n,\,\prod_in_i\cong\delta\bmod2\bigr\}},\\
{\textstyle
M_{\vec n}=\bigl\{(m_1,\dots,m_d)\mid
1\le m_i\le n_i,\,\sum_im_i\cong d+1\bmod2\bigr\}}.
\end{lgathered}
\]
\end{lemma}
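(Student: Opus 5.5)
Substitute the multilinear decomposition~\eqref{eq:homogeneous-lie-polynomial} with $a=X$ and second argument $u=\bbch{\bch XY}Y$ (respectively $v=\bbch{\bch YX}Y$). Since $X\in\spanXY$ is homogeneous of degree~$1$, we get
\[
\deg\bhomog[m]Xu=\sum_{0<d<m}\;\sum_{n_1+\dots+n_d=n-m+d}L_d(X,\deg[n_1]u,\dots,\deg[n_d]u).
\]
This uses the multilinearity of $L_d$ in the $Y_i$ and the fact that $X$ contributes degree $m-d$. The constraint on $(n_1,\dots,n_d)$ is exactly $\sum_in_i+m-d=n$ with $n_i\ge1$. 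By definition $\deg[n_i]u=W_{n_i}(X,Y)$. So the first term is $\sum_d\sum_{\vec n}L_d(X,W_{n_1},\dots,W_{n_d})$, summed over all of $N^0_d\cup N^1_d$.

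Next, relate $\deg[k]v$ to the $W_{k,j}$. Write $\deg[k]v=\sum_{1\le j\le k}\deg[k]\bhomog[j]{\bch YX}Y$, and apply~\eqref{eq:symmetry-abc} with $n$ replaced by~$k$ (the lemma holds for any total degree). This gives $\deg[k]\bhomog[j]{\bch YX}Y=(-1)^{k+j}W_{k,j}$. Then
\[
\deg[k]v=\sum_j(-1)^{k+j}W_{k,j}=(-1)^k\sum_j(-1)^jW_{k,j}.
\]
Substituting, the second term equals
\[
\sum_d\sum_{\vec n}\sum_{\vec m}\prod_i(-1)^{n_i+m_i}\,L_d(X,W_{n_1,m_1},\dots,W_{n_d,m_d}),
\]
with $1\le m_i\le n_i$. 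Expanding the first term the same way gives the identical sum with all signs equal to $+1$. The difference therefore carries the coefficient $1-(-1)^{\sum n_i+\sum m_i}$, which is $0$ or $2$.

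The parity bookkeeping, which I expect to be the main obstacle, goes as follows. We have $\sum n_i=n-m+d\equiv d\pmod2$, because $m\equiv n$. So the coefficient is $2$ exactly when $\sum m_i\equiv d+1\pmod 2$, which is the condition $\vec m\in M_{\vec n}$, and $0$ otherwise. This would give $\sum_{\vec n}\sum_{\vec m\in M_{\vec n}}2L_d(\dots)$ over all $\vec n$.

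This does not yet match~\eqref{eq:bch-formula-even-summand}, which splits the $\vec n$ according to $\prod n_i$. The missing ingredient must be a vanishing or collapse for the $W_{k,j}$. If some $n_i$ is even, then $\prod n_i$ is even. Here I would use the identity from the proof of recursion~II, $\bbch{\bch XY}Y=\bch X{2Y}$, so $W_k=\homog[k]X{2Y}$, combined with~\eqref{eq:symmetry-abc}. I would try to show that for even $k$ the sum $\sum_{j\equiv k+1}W_{k,j}$ vanishes or that $W_{k,j}$ with $j\not\equiv k$ contributes trivially, so that the $\vec n\in N^1_d$ part (all $n_i$ odd) reorganises into the full-$W$ form, or conversely. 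Concretely, I would check which parity of $n_i$ allows replacing $\sum_{\vec m\in M_{\vec n}}2L_d$ by $L_d(X,W_{n_1},\dots)$ by comparing with the unsplit expansion. The precise assignment of $N^0_d$ versus $N^1_d$ will be settled by this case check. I expect this step to be the delicate point: the combinatorial regrouping via the symmetry of the $W_{k,j}$ in~$j$.
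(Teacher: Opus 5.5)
There is a genuine gap, at exactly the point you flagged. Everything up to your parity bookkeeping is right. Write $W'_\nu=\deg[\nu]\bbch{\bch YX}Y$. Expanding both $L_d(X,W_{n_1},\dots,W_{n_d})$ and $L_d(X,W'_{n_1},\dots,W'_{n_d})$ into the pieces $W_{n_i,m_i}$ via \eqref{eq:symmetry-abc} correctly gives the left side of \eqref{eq:bch-formula-even-summand} as $\sum_{0<d<m}\sum_{\vec n\in N_d}\sum_{\vec m\in M_{\vec n}}2L_d(X,W_{n_1,m_1},\dots,W_{n_d,m_d})$, with $\vec n$ ranging over all of $N_d=N^0_d\sqcup N^1_d$. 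This is essentially the paper's second and third substitutions, applied to all of $N_d$. But it is not the stated formula. For $\vec n\in N^0_d$ you still owe the identity
\[
\sum_{\vec m\in M_{\vec n}}2L_d(X,W_{n_1,m_1},\dots,W_{n_d,m_d})=L_d(X,W_{n_1},\dots,W_{n_d}),
\]
and the proposal leaves it to an unspecified ``case check''. The candidate facts you suggest are false. Since $\homog[1]ab=a+b$, one finds $W_{2,1}=W_{2,2}=\oneover2\lie XY$, so neither does $\sum_{j\text{ odd}}W_{2,j}$ vanish nor do the $W_{k,j}$ with $j\not\equiv k$ drop out. Moreover, it is the $N^0_d$ part (some $n_i$ even), not $N^1_d$, that takes the full-$W$ form. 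Nor can $\bbch{\bch XY}Y=\bch X{2Y}$ help, since what is needed is information about $\bbch{\bch YX}Y$.

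The missing ingredient is the paper's first substitution: $W'_\nu=0$ for even $\nu$, i.e.\ $\log(e^Ye^Xe^Y)$ is odd. This follows from associativity in the form $\bbch{\bch YX}Y=\bbch Y{\bch XY}$ together with $\bch{-a}{-b}=-\bch ba$:
\[
\bbch{\bch{-Y}{-X}}{-Y}=\bbch{-\bch XY}{-Y}=-\bbch Y{\bch XY}=-\bbch{\bch YX}Y,
\]
so $(-1)^\nu W'_\nu=-W'_\nu$. In your notation this reads $\sum_j(-1)^jW_{\nu,j}=(-1)^\nu W'_\nu=0$ for even $\nu$ (e.g.\ $-W_{2,1}+W_{2,2}=0$). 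Now if $\vec n\in N^0_d$, some $n_i$ is even, and multilinearity gives $\sum_{\vec m}(-1)^{\sum_im_i}L_d(X,W_{n_1,m_1},\dots,W_{n_d,m_d})=L_d\bigl(X,\sum_{m_1}(-1)^{m_1}W_{n_1,m_1},\dots,\sum_{m_d}(-1)^{m_d}W_{n_d,m_d}\bigr)=0$. Since your coefficient is $1-(-1)^{d+\sum_im_i}$, the $N^0_d$ terms collapse to $L_d(X,W_{n_1},\dots,W_{n_d})$, as required. The paper does this in the other order: it first observes that $L_d(X,W'_{n_1},\dots,W'_{n_d})=0$ on $N^0_d$, and then performs your expansion only on $N^1_d$.
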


\begin{proof}
Let $N_d=\{(n_1,\dots,n_d)\mid n_i\ge1,\,\sum_in_i+m-d=n\}$ and let $W'_\nu=\deg[\nu]\bbch{\bch YX}Y$. Since $\bbch{\bch XY}Y=\sum_{\nu\ge1}W_\nu$ and $\bbch{\bch YX}Y=\sum_{\nu\ge1}W'_\nu$, bilinearity of the commutator yields
\begin{equation}
\label{eq:initial-summation}
\begin{aligned}
\MoveEqLeft
\deg\bhomog[m]X{\bbch{\bch XY}Y}
-\deg\bhomog[m]X{\bbch{\bch YX}Y}\\
&=\sum_{0<d<m}\sum_{\vec n\in N_d}
L_d(X,W_{n_1},\dots,W_{n_d})-L_d(X,W'_{n_1},\dots,W'_{n_d}).
\end{aligned}
\end{equation}
Formula~\eqref{eq:bch-formula-even-summand} is obtained from formula~\eqref{eq:initial-summation} by three successive substitutions:

\paragraph{First substitution.}
Homogeneity and associativity yield
\[
(-1)^\nu\deg[\nu]\bbch{\bch YX}Y
=\deg[\nu]\bbch{\bch{-Y}{-X}}{-Y}
=-\deg[\nu]\bbch{\bch YX}Y,
\]
and hence $W'_\nu=0$ when $\nu\cong0\bmod2$. Thus $L_d(X,W'_{n_1},\dots,W'_{n_d})$ vanishes on~$N^0_d$. Since $N_d$ is the disjoint union of $N^0_d$ and~$N^1_d$, the inner sum of~\eqref{eq:initial-summation} may therefore be substituted with
\begin{equation}
\label{eq:inner-summation}
\sum_{\vec n\in N^0_d}
L_d(X,W_{n_1},\dots,W_{n_d})
+\sum_{\vec n\in N^1_d}
L_d(X,W_{n_1},\dots,W_{n_d})-L_d(X,W'_{n_1},\dots,W'_{n_d}).
\end{equation}

\paragraph{Second substitution.}
Now let $W'_{\nu\mu}=\deg[\nu]\bhomog[\mu]{\bch YX}Y$. We have $W'_\nu=\sum_{\mu\ge1}W'_{\nu\mu}$, and $W'_{\nu\mu}=(-1)^{\nu+\mu}W_{\nu\mu}$ by~\eqref{eq:symmetry-abc}. Therefore the second sum of~\eqref{eq:inner-summation} may be substituted with
\begin{equation}
\label{eq:inner-summation-cancellation}
\adjustlimits\sum_{\vec n\in N^1_d}\sum_{\vec m}
\bigl(1-(-1)^{\varepsilon(\vec n,\vec m)}\bigr)L_d(X,W_{n_1,m_1},\dots,W_{n_d,m_d}),
\end{equation}
where $\vec m=(m_1,\dots,m_d)$, $1\le m_i\le n_i$, and $\varepsilon(\vec n,\vec m)=\sum_i(n_i+m_i)=n-m+d+\sum_im_i$.

\paragraph{Third substitution.}
Finally, since $m\cong n\bmod 2$, the inner sum of~\eqref{eq:inner-summation-cancellation} may be substituted with
\[
\sum_{\vec m\in M_{\vec n}}2L_d(X,W_{n_1,m_1},\dots,W_{n_d,m_d}).
\]

Altogether these substitutions yield formula~\eqref{eq:bch-formula-even-summand}.
\end{proof}

\section{}

To illustrate the recursive \hyperlink{bch-formula-recursion-simplified}{Baker--Campbell--Hausdorff formula (II)}, we shall compute $\homog XY=\homogeven XY+\homogodd XY$ for $n=3,4,5$, according to formulas~\eqref{eq:bch-formula-even}, \eqref{eq:bch-formula-odd}, and \eqref{eq:bch-formula-even-summand}, cf.~\cite[(26)]{hausdorff}. Explicit Bernoulli numbers are notably absent, cf.~\cite[\S I.3]{hausdorff}.

For the sake of simplicity we write $Z_n$ for $\homog XY$, and $\ilie{abc\dots d}$ for $[[\dots[[a,b],c],\dots],d]$.

\subsection*{Case $n=3$.}
We have
\[
\begin{lgathered}
W_{3,2}(X,Y)=\homog[2]{Z_2}Y=\oneover4\ilie{XY^2},\\
6\homogodd[3]XY
=2W_{3,2}(X,Y)+W_{3,2}(2Y,X)
=\oneover2\ilie{XY^2}+\oneover2\ilie{YX^2}.
\end{lgathered}
\]
Summation~\eqref{eq:bch-formula-even} is vacuous when $n=3$, hence
\[
\homog[3]XY=\homogodd[3]XY=\oneover{12}\ilie{XY^2}+\oneover{12}\ilie{YX^2}.
\]

\subsection*{Case $n=4$.}
We have
\begin{align*}
&\begin{aligned}
12\homogeven[4]XY
&=\deg[4]\bhomog[2]X{\bbch{\bch XY}Y}-\deg[4]\bhomog[2]X{\bbch{\bch YX}Y}\\
&=\ilie{XW_{3,2}}\\
&=\oneover4\ilie{YXYX},
\end{aligned}
\shortintertext{and}
&W_{4,3}(X,Y)
=\oneover{12}\ilie{Z_2Y^2}
+\oneover{12}\ilie{YZ_1Z_2}
+\oneover{12}\ilie{YZ_2Z_1}
=\oneover{24}\ilie{YXYX},\\
&12\homogodd[4]XY
=2W_{4,3}(X,Y)-W_{4,3}(2Y,X)
=\oneover4\ilie{YXYX}.
\end{align*}
Hence
\[
\homog[4]XY=\oneover{24}\ilie{YXYX}.
\]

\subsection*{Case $n=5$.}
We have
\begin{align*}
&W_{1,1}=X+2Y,\\
&W_2=Z_2+\homog[2]{Z_1}Y=\ilie{XY},
\quad\text{(by~\eqref{eq:degree-relations})}\\
&\begin{aligned}
30\homogeven[5]XY
&=\deg[5]\bhomog[3]X{\bbch{\bch XY}Y}
 -\deg[5]\bhomog[3]X{\bbch{\bch YX}Y}\\
&=\oneover6\ilie{W_{3,2}X^2}
 +\oneover{12}\ilie{XW_2^2}
 +\oneover6\ilie{XW_{1,1}W_{3,2}}
 +\oneover6\ilie{XW_{3,2}W_{1,1}}\\
&=\oneover{12}\ilie{YX^3Y}
 +\oneover{12}\ilie{XYXYX}
 +\oneover6\ilie{YXYXY}
 +\oneover{12}\ilie{XY^3X},
\end{aligned}
\shortintertext{and}
&W_{5,2}(X,Y)
=\homog[2]{Z_4}Y
=\oneover{48}\ilie{YXYXY},\\
&W_{5,4}(X,Y)
=\oneover{24}\ilie{YZ_1YZ_2}+\oneover{24}\ilie{YZ_2YZ_1}
=\oneover{48}\ilie{YXYXY}-\oneover{48}\ilie{XY^4},\\
&\begin{aligned}
30\homogodd[5]XY
&=2W_{5,2}(X,Y)+W_{5,2}(2Y,X)
 +2W_{5,4}(X,Y)+W_{5,4}(2Y,X)\\
&=\oneover{12}\ilie{YXYXY}
 +\oneover6\ilie{XYXYX}
 -\oneover{24}\ilie{XY^4}
 -\oneover{24}\ilie{YX^4}.
\end{aligned}
\end{align*}
Hence
\[
\homog[5]XY
=\oneover{120}\ilie{YXYXY}
+\oneover{120}\ilie{XYXYX}
+\oneover{360}\ilie{YX^3Y}
+\oneover{360}\ilie{XY^3X}
-\oneover{720}\ilie{XY^4}
-\oneover{720}\ilie{YX^4}.
\]

\bibliographystyle{plain}

\end{document}